\newcommand*{\R}{\mathbb{R}}
\newcommand{\abs}[1]{\ensuremath{\left\lvert #1 \right\rvert}}
\newcommand{\halmaz}[1]{\left\{\,#1\,\right\}}
\theoremstyle{plain}
\newtheorem{thm}{Theorem}
\newtheorem{lemma}[thm]{Lemma}
\newtheorem{remark}[thm]{Remark}
\theoremstyle{definition}
\newtheorem{problem}{Problem} 
\title[The Lie group of the general Li\'{e}nard-type equation]{The Lie symmetry group of the general Li\'{e}nard-type equation}
\author[\'A.~Figula]{\'Agota Figula}
\email{\url{figula@science.unideb.hu}}
\author[G.~Horv\'ath]{G\'abor Horv\'ath}
\email{\url{ghorvath@science.unideb.hu}}
\author[T.~Milkovszki]{Tam\'as Milkovszki}
\email{\url{milkovszki@science.unideb.hu}}
\author[Z.~Muzsnay]{Zolt\'an Muzsnay}
\email{\url{muzsnay@science.unideb.hu}}
\address{University of Debrecen, Institute of Mathematics, Pf.~400,
  Debrecen, 4002, Hungary}
\thanks{ The research was supported by the European Union's Seventh Framework
  Programme (FP7/2007-2013) under grant agreement no.~318202.  The first,
  third and fourth authors were partially supported by the European Union's
  Seventh Framework Programme (FP7/2007-2013) under grant agreement
  no.~317721.  The second author was partially supported by the Hungarian
  Scientific Research Fund (OTKA) grant no.~K109185 and grant
  no.~FK124814. The first and fourth authors were partially supported by the
  EFOP-3.6.1-16-2016-00022 project.  This projects have been supported by the
  European Union, co-financed by the European Social Fund.}
\keywords{second order ordinary differential equation, Li\'{e}nard-type equation, Levinson--Smith-type equation, Lie group, symmetry analysis, Lie algebra of infinitesimal symmetries}
\subjclass[2010]{34C14, 34A26, 34C20}
\begin{document}

\begin{abstract}
  We consider the general Li\'{e}nard-type equation
  $\ddot{u} = \sum_{k=0}^n f_k \dot{u}^k$ for $n\geq 4$.  This equation
  naturally admits the Lie symmetry $\frac{\partial}{\partial t}$.  We
  completely characterize when this equation admits another Lie symmetry, and
  give an easily verifiable condition for this on the functions
  $f_0, \dots , f_n$.  Moreover, we give an equivalent characterization of
  this condition.  Similar results have already been obtained previously in
  the cases $n=1$ or $n=2$.  That is, this paper handles all remaining cases
  except for $n=3$.
\end{abstract}
\maketitle

\section{Introduction}\label{sec:intro}

In this paper we consider the Li\'{e}nard-type second order ordinary
differential equation
\begin{equation}
  \label{EQ:01}
  \ddot{u}=\sum_{k=0}^n f_k(u) \dot{u}^k, 
\end{equation}
where the dot denotes differentiating by the independent variable $t$
representing the time.  Equation \eqref{EQ:01} is a special case of the
Levinson--Smith-type equation
\begin{math}
  \ddot{u} = g_1 \left( u, \dot{u} \right) \dot{u} +g_0(u)
\end{math}
\cite[G.3, p.~198--199]{Mickens1981}, for which existence and uniqueness of
a limit cycle have been established under certain conditions
\cite{Levinson1943, LevinsonSmith1942}.

Equation \eqref{EQ:01} is a common generalization of the Rayleigh-type
equation $\ddot{u}+F(\dot{u})+u=0$ when $F$ is a polynomial, the classical Li\'{e}nard-type equation
$\ddot{u}=f_1(u) \dot{u} + f_0(u)$, and the quadratic Li\'{e}nard-type
equation $\ddot{u}=f_2(u) \dot{u}^2 + f_1(u) \dot{u} + f_0(u)$.  These
equations come up quite often in Physics or Biology.  Rayleigh-type systems
play an important role in the theory of sound \cite{Rayleigh1945} or in the
theory of non-linear oscillations \cite[Chapter~2.2.4]{HandbookODE}.
Classical Li\'{e}nard-type equations arise in the model of the van der Pol
oscillator applied in physical and biological sciences
\cite{vanderPol1927}, but electric activity of the heart rate
\cite{vanderPolvanderMark1928} or nerve impulses are modelled by a
Li\'{e}nard-type model, as well \cite{Fitzhugh1961, Nagumo1962} or
\cite[Chapter 7]{Jones}.  In \cite{NucciSanchini2015} the population of
Easter Island is modelled, and the system of differential equations is then
reduced to a second order quadratic Li\'{e}nard-type equation.  One can
even find applications in economy \cite{Faria2008, Goodwin1975, Kaldor1940,
  Kalecki1937}.

Symmetry analysis is a very useful tool developed to understand and solve
differential equations. Several examples come from Physics (see
e.g.~\cite{vinogradov, Olver} for comprehensive studies on the topic), and
an increasing number of examples from Biology (see
e.g.~\cite{Nucci1Biology, Nucci, Nucci2Biology, NucciSanchini2015}).
Finding some symmetries for a differential equation can be used to derive
an appropriate change of coordinates which then helps to eliminate the
independent variables or to decrease the order of the system.

In many cases mentioned above (e.g.\ the Fitzhugh--Nagumo model
\cite{Fitzhugh1961, Nagumo1962} or the model for the population of Easter
Island \cite{NucciSanchini2015}) the model is based on a first order system
of two equations equivalent to a second order Li\'{e}nard equation.  In
such a case, one might benefit to consider the equivalent second order
system, which would only admit a finite dimensional Lie symmetry group
instead of an infinite one.  If this Lie group is at least two-dimensional,
then pulling the symmetries back to the original system could yield two
independent symmetries of the original system, and solutions can be
determined by quadratures.  This method has been applied successfully in
several situations in the past (see e.g.~\cite{Nucci1Biology,
  Nucci2Biology, NucciSanchini2015} for some recent examples in Biology).
This motivates to study the Lie symmetries of \eqref{EQ:01}.

Pandey, Bindu, Senthilvelan and Lakshmanan \cite{Pandey2009-1, Pandey2009-2} considered the classical Li\'{e}nard equation
\begin{equation}
  \label{eq:pandey}
  \ddot{u}=f_1(u) \dot{u} + f_0(u), 
\end{equation}
where $f_1$ and $f_0$ are arbitrary, infinitely many times differentiable
functions.  They classified when \eqref{eq:pandey} has a 1, 2, 3, or 8
dimensional Lie symmetry group depending on $f_0$ and $f_1$.  Then Tiwari,
Pandey, Senthilvelan and Lakshmanan \cite{Tiwari2013, Tiwari2014}
classified the dimension of the Lie symmetry group of quadratic
Li\'{e}nard-type equations without $\dot{u}$ term, and then more generally
\cite{Tiwari2015} the mixed quadratic Li\'{e}nard-type equation
\begin{equation}\label{eq:tiwari}
\ddot{u}=f_2(u) \dot{u}^2+ f_1(u) \dot{u} + f_0(u), 
\end{equation}
where $f_0$, $f_1$ and $f_2$ are arbitrary, infinitely many times
differentiable functions.  Further, Paliathanasis and Leach
\cite{PaliathanasisLeach2016} showed how one can simplify \eqref{eq:tiwari}
by removing $f_2$ from \eqref{eq:tiwari} in the case $f_1=0$.  The question
naturally arises: what are the Lie symmetries if the right-hand side of
\eqref{eq:tiwari} is a higher order polynomial in $\dot{u}$?

In this paper we consider \eqref{EQ:01} for $n\geq 4$ and for
differentiable functions $f_k$ depending only on $u$, and not on $t$.
Note, that equation \eqref{EQ:01} is autonomous, therefore the tangential
Lie algebra $\mathcal{L}$ of the Lie group of all its symmetries always
contains the $1$-dimensional subalgebra generated by the vector field
$\frac{\partial}{\partial t}$.  Determining another generator of
$\mathcal{L}$ would then lead to a solution by quadratures of
\eqref{EQ:01}, and of any first order system equivalent to it.  In
Theorem~\ref{thm:main} (see Section~\ref{sec:alg} for details) we completely characterize the case when
\eqref{EQ:01} admits a more than 1 (in fact, 2) dimensional symmetry group.
In particular, we give conditions (\ref{eq:f_nnotzero}--\ref{eq:0-nuj}) such that the symmetry group is 2 dimensional if and only if these conditions hold. 

Here, conditions (\ref{eq:f_nnotzero}--\ref{eq:F}) are natural, but the meaning
of the system \eqref{eq:0-nuj} %(\ref{eq:0uj}--\ref{eq:kuj})
seems less intuitive, even though the system
\eqref{eq:0-nuj} %(\ref{eq:0uj}--\ref{eq:kuj})
is easily verifiable for a particular choice of $F$.  In
Theorem~\ref{thm:conditions} (see Section~\ref{sec:conditions} for details) we provide a necessary and sufficient
condition for $f_0, \dots , f_n$ to satisfy \eqref{eq:0-nuj}.  It turns out
that $f_0, \dots , f_n$ satisfy \eqref{eq:0-nuj} if and only if they are
expressible by $F$ and some constants.

\bigskip

\section{The symmetry condition}\label{sec:symcond}

We formulate the symmetry condition for \eqref{EQ:01} in this section.
Consider equation~\eqref{EQ:01} on the plane $(t,u)$, where $t$ is the
independent variable, and $u$ is the dependent variable.  Further, the
computations will be slightly easier if we consider the right-hand side as
an infinite sum
$\sum_k f_k \dot{u}^k = \sum_{k = -\infty}^{\infty} f_k \dot{u}^k$, where
$f_k=0$ if $k < 0$ or $k>n$.

The general form of an infinitesimal generator of a symmetry of
\eqref{EQ:01} has the form
\begin{equation}
\label{eq:inf_gen}
X=\xi(t,u)\frac{\partial}{\partial t}+\eta(t,u)\frac{\partial}{\partial u}.
\end{equation}
Let $D$ denote the total derivation by $t$, that is
$D\xi = \xi_t + \dot{u} \xi_u$, $D\eta = \eta_t + \dot{u} \eta_u$.  We use
the convention of writing partial derivatives into the lower right index.
Then the first prolongation of $X$ is
\begin{displaymath}
  X^{1}=\xi\frac{\partial}{\partial t}+\eta\frac{\partial}{\partial
    u}+(D\eta-\dot{u}D\xi)\frac{\partial}{\partial \dot{u}} =
  \xi\frac{\partial}{\partial t}+\eta\frac{\partial}{\partial u} + \Bigl(
    \eta_t + \left( \eta_u - \xi_t \right) \dot{u} - \xi_u \dot{u}^2
  \Bigr) \frac{\partial}{\partial \dot{u}}.
\end{displaymath}
Further, let
\begin{displaymath}
  S^1 =\frac{\partial}{\partial t}+\dot{u}\frac{\partial}{\partial u}+ \left( \sum_{k} f_k \dot{u}^k \right) \frac{\partial}{\partial \dot{u}},
\end{displaymath}
be the spray corresponding to the differential equation~\eqref{EQ:01}.  The
vector field \eqref{eq:inf_gen} is an infinitesimal symmetry of
\eqref{EQ:01} if and only if its first prolongation $X^1$ satisfies the Lie
bracket condition
\begin{equation}
  \label{symmetrycondition} 
  [X^{1}-\xi S^{1},S^{1}]= 0  
\end{equation}
on the space $(t,u,\dot u)$ (cf.~\cite[Chapter 4, \S
3]{vinogradov}). % or \cite{Kamp2009}).
Substituting $X^1$ and $S^1$ into \eqref{symmetrycondition} we obtain
\begin{align*}
  0&=  \left[ X^{1} - \xi S^{1}, S^{1} \right]
  \\
   &= \Bigl( \left(\eta - \xi \dot{u} \right) \Bigl( \sum_{k} f_k \dot{u}^k \Bigr)_u
     + \Bigl( \eta_t + \left( \eta_u - \xi_t \right) \dot{u} - \xi_u \dot{u}^2  - \xi \bigl( \sum_{k} f_k \dot{u}^k \bigr) \Bigr) \Bigl( \sum_{k} f_k \dot{u}^k \Bigr)_{\dot{u}} 
  \\
   &\qquad -\Bigl( \eta_t + \left( \eta_u - \xi_t \right) \dot{u} - \xi_u \dot{u}^2 -\xi \Bigl( \sum_{k} f_k \dot{u}^k \Bigr)\Bigr)_t 
     -\dot{u} \Bigl( \eta_t + \left( \eta_u - \xi_t \right) \dot{u} - \xi_u \dot{u}^2 - \xi \Bigl( \sum_{k} f_k \dot{u}^k \Bigr)\Bigr)_u 
  \\
   &\qquad  -\Bigl( \sum_{k} f_k \dot{u}^k \Bigr) \Bigl( \eta_t + \left( \eta_u - \xi_t \right) \dot{u} - \xi_u \dot{u}^2 -\xi \Bigl( \sum_{k} f_k \dot{u}^k \Bigr)\Bigr)_{\dot{u}} \Bigr)
     \frac{\partial}{\partial \dot{u}} 
  \\
   &= \Bigl( -\eta_{tt} + \left( \xi_{tt} - 2 \eta_{tu} \right) \dot{u} + \left( 2 \xi_{ut} - \eta_{uu} \right) \dot{u}^2 + \xi_{uu} \dot{u}^3 \\
   &\qquad + \sum_k \left( f_k' \eta +(k+1)f_{k+1} \eta_t + (k-1)f_k \eta_u + (2-k)f_k \xi_t +(4-k)f_{k-1} \xi_u \right) \dot{u}^k \Bigr)
     \frac{\partial}{\partial \dot{u}},
\end{align*}
therefore the symmetry condition is
\begin{equation}
\label{EQ:0}
\begin{alignedat}{1}
  - \eta_{tt} + f_0'\eta + f_1\eta_t -f_0 \eta_u + 2 f_0 \xi_t \hphantom{)
    \cdot \dot{u}}&
  \\
  + \bigl( \xi_{tt} - 2 \eta_{tu} + f_1'\eta + 2 f_2 \eta_t + f_1\xi_t + 3
  f_0 \xi_u \bigr) \cdot \dot{u} &
  \\
  + \bigl(2\xi_{tu}-\eta_{uu} + f_2' \eta + 3 f_3 \eta_t + f_2 \eta_u + 2
  f_1 \xi_{u} \bigr) \cdot \dot{u}&^2
  \\
  + \bigl( \xi_{uu} + f_3' \eta + 4 f_4 \eta_t + 2 f_3 \eta_u -f_3 \xi_t +
  f_2 \xi_u \bigr) \cdot \dot{u}&^3
  \\
  + \textstyle{\sum_{\, k = 4}^{\, n-1}} \bigl( f_k' \eta + (k+1)f_{k+1}
  \eta_t + (k-1)f_k \eta_u + (2-k)f_k \xi_t + (4-k)f_{k-1} \xi_u \bigr)
  \cdot \dot{u}&^k
  \\
  + \bigl( f_n' \eta + (n-1)f_n \eta_u + (2-n)f_n \xi_t + (4-n)f_{n-1}
  \xi_u \bigr) \cdot \dot{u}&^n
  \\
  + (3-n)f_{n} \xi_u \cdot \dot{u}&^{n+1} & =0.
\end{alignedat}
\end{equation}

\section{Lie symmetry algebra}\label{sec:alg}

We consider \eqref{EQ:01} for $n\geq 4$ and for differentiable functions
$f_k$ depending only on $u$, and not on $t$. In Theorem~\ref{thm:main} we
completely characterize the case when \eqref{EQ:01} admits more then 1 (in
fact, 2) dimensional symmetry group. We prove following
\begin{thm}
  \label{thm:main}
  Consider the equation~\eqref{EQ:01} for some $n\geq 4$ and for
  $f_0, \dots , f_n \colon I \to \R$ being differentiable functions such
  that $f_n$ is not constant zero on the open interval $I \subseteq \R$.
  Then the Lie symmetry algebra $\mathcal{L}$ of \eqref{EQ:01} is exactly 1
  dimensional, unless there exists a constant $a \in \R$, an open interval
  $U \subseteq I$, and a three-times differentiable function
  $F \colon U \to \R$ such that for all $u \in U$ we have
  \begin{align}
  \label{eq:f_nnotzero}
    &f_n(u)  \neq 0.\\
    \label{eq:Fnotzero}
    &F(u) \neq 0, \\
    \label{eq:F}
    % \label{eq:g}
    & F'(u) = \abs{f_n(u)}^{\frac{1}{n-1}}, 
  \end{align}
  and with the notation
  \begin{math}
    g(u) = \frac{(n-2)F(u)}{(n-1)F'(u)}
  \end{math}
  the following hold:
  \begin{subequations}
    \label{eq:0-nuj}
    \begin{alignat}{5}
      \label{eq:0uj}
       -a^2g + f_0'g + af_1g + (-1)f_0 g' + 2 f_0 &=0,
      \\
      \label{eq:1uj}
       a\left( 1 - 2 g' \right) + f_1'g + 2a f_2 g +  f_1 &=0,
      \\
      \label{eq:2uj}
       -g'' + f_2' g + 3 a f_3 g + f_2 g'  &=0,
      \\
      \label{eq:kuj}
      \hphantom{3 \leq k \leq n-1.} f_k' g + (k+1) a f_{k+1} g +
      (k-1)f_k g' + (2-k) f_k &=0, \qquad 3 \leq k \leq n-1.
    \end{alignat}
  \end{subequations}
  Further, if both $F_1$ and $F_2$ satisfy conditions
  (\ref{eq:f_nnotzero}--\ref{eq:0-nuj}), then $F_1 = F_2$.
\end{thm}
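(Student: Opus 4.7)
My plan is to analyze the symmetry condition \eqref{EQ:0} as a polynomial identity in $\dot{u}$, each of whose coefficients (a function of $(t, u)$) must vanish separately. The highest-degree coefficient, that of $\dot{u}^{n+1}$, reads $(3-n) f_n \xi_u$; since $n \geq 4$ and $f_n$ is not identically zero, I conclude that $\xi_u = 0$ on the open subinterval where $f_n \neq 0$, so $\xi$ depends only on $t$. The $\dot{u}^n$ coefficient then becomes a first-order linear ODE in $u$ for $\eta$, namely $(n-1) f_n \eta_u + f_n' \eta = (n-2) f_n \xi'$, which I solve via integrating factor $F'(u) = |f_n(u)|^{1/(n-1)}$ to obtain
\[
  \eta(t,u) = g(u)\, \xi'(t) + \frac{H(t)}{F'(u)},
\]
where $g = (n-2)F/((n-1)F')$ and $H$ is a function of $t$ arising as the integration constant. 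I shrink the interval if needed so that $F \neq 0$, fulfilling \eqref{eq:Fnotzero}.

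Substituting $\xi = \xi(t)$ and this form of $\eta$ into the remaining $\dot{u}^k$ coefficient equations for $0 \leq k \leq n-1$ transforms each into an identity of the shape
\[
  \xi'(t) A_k(u) + \xi''(t) B_k(u) + H(t) C_k(u) + H'(t) D_k(u) + \Delta_k(t,u) = 0,
\]
where $A_k, B_k, C_k, D_k$ are explicit functions of $f_j, F, g$ and their derivatives, and $\Delta_k$ collects additional $\xi'''(t), H''(t)$ terms appearing only for $k = 0$. The trivial symmetry $\partial_t$ corresponds to $\xi \equiv \text{const}$ and $H \equiv 0$; an enlargement of $\mathcal{L}$ requires a nontrivial pair $(\xi(t), H(t))$ satisfying all identities simultaneously.

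The main obstacle I anticipate is showing that such a nontrivial solution forces $\xi''(t) = a\xi'(t)$ and $H'(t) = aH(t)$ for a common constant $a \in \R$. I plan to view each identity as a linear constraint on the $\R^4$-vector $(\xi'(t), \xi''(t), H(t), H'(t))$ (suitably extended for $k = 0$), and argue that for a common nontrivial annihilator to exist across all $(k, u)$, the kinematic relations $\xi'' = (\xi')'$ and $H' = (H)'$ force the annihilator direction to have the form $(p, ap, r, ar)$ for some $a, p, r \in \R$. The gauge freedom in choosing the integration constant of $F$ (explicitly, $F \to F + \lambda$ induces $A_k \to A_k + \tfrac{(n-2)\lambda}{n-1} C_k$ and $B_k \to B_k + \tfrac{(n-2)\lambda}{n-1} D_k$ while $C_k, D_k$ are unchanged) then allows normalizing this direction to $(1, a, 0, 0)$; after gauge fixing, $H \equiv 0$ and the second symmetry takes the explicit form $X = e^{at}\partial_t + ag(u) e^{at}\partial_u$ when $a \neq 0$, or $X = t\partial_t + g(u)\partial_u$ when $a = 0$.

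Substituting this $X$ into each $\dot{u}^k$ coefficient equation and simplifying (using $\xi'' = a\xi'$ and $\xi''' = a^2\xi'$) yields exactly the theorem's conditions \eqref{eq:0uj}--\eqref{eq:kuj}; for instance, the $\xi'''$ term in $\dot{u}^0$ supplies the $-a^2 g$ piece of \eqref{eq:0uj}, and the $\xi''$ terms in $\dot{u}^1$ and $\dot{u}^2$ supply the $a(1-2g')$ piece of \eqref{eq:1uj} and the $3af_3 g$ piece of \eqref{eq:2uj} respectively. Conversely, if \eqref{eq:0-nuj} hold, a direct substitution verifies that this $X$ is an infinitesimal symmetry independent of $\partial_t$, giving a $2$-dimensional Lie algebra. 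For the uniqueness of $F$: if both $F_1$ and $F_2$ satisfy \eqref{eq:f_nnotzero}--\eqref{eq:0-nuj}, then \eqref{eq:F} forces $F_2 = F_1 + \lambda$ for some $\lambda \in \R$, and the gauge-shift identity above yields $\tfrac{(n-2)\lambda}{n-1}(C_k + aD_k) = 0$ for every $k$; since the characterization is of precisely the $2$-dimensional case (an additional abelian symmetry of the form $(e^{at}/F')\partial_u$ would correspond to $C_k + aD_k \equiv 0$ for all $k$, producing a $3$-dimensional algebra), not all $C_k + aD_k$ vanish identically, so $\lambda = 0$ and hence $F_1 = F_2$.
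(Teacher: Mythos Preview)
Your overall route coincides with the paper's: treat the symmetry condition \eqref{EQ:0} as a polynomial in $\dot u$, read off $\xi_u=0$ from the $\dot u^{n+1}$-coefficient, integrate the $\dot u^n$-equation to express $\eta$ through $\xi_t$ and $F$, and then reduce the lower-order coefficients to the system \eqref{eq:0-nuj}. In one respect you are actually more careful than the paper: when integrating $(F'\eta)_u=\tfrac{n-2}{n-1}F'\xi_t$ in $u$ you retain the $t$-dependent integration constant and write $\eta=g(u)\,\xi'(t)+H(t)/F'(u)$, whereas the paper simply writes $\eta=g\,\xi_t$ and silently drops $H$. From that point the paper proceeds very concretely: it substitutes $\eta=g\xi_t$ into the $k=n-1$ equation, separates variables to obtain $\xi_{tt}=a\xi_t$ with $a=-[f_{n-1}'g+(n-2)f_{n-1}g'+(3-n)f_{n-1}]/(nf_ng)$, inserts $\xi_t=ce^{at}$, $\eta=ce^{at}g$ into the remaining equations to read off \eqref{eq:0-nuj}, and proves uniqueness of $F$ via the linear independence of $e^{a_1t}$ and $e^{a_2t}$.

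Your treatment of the extra $H$-term, however, is where the proposal has a genuine gap. The linear-algebra step tacitly assumes that the common kernel of the constraint vectors $(A_k(u),B_k(u),C_k(u),D_k(u))$ is one-dimensional (you speak of ``the annihilator direction''); this is never argued, and if the kernel were larger your conclusions about the form of $(\xi',\xi'',H,H')$ would not follow. More seriously, the gauge normalisation $F\mapsto F+\lambda$ to the direction $(1,a,0,0)$ requires the first component $p$ to be nonzero. If $p=0$ then $\xi'\equiv 0$, the candidate extra generator is $(e^{at}/F')\,\partial_u$, and no shift of $F$ produces a symmetry with $\xi_t\not\equiv 0$; in that branch your derivation of \eqref{eq:0-nuj} simply does not go through and must be handled separately. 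The same lacuna makes your uniqueness argument circular: you infer $\lambda=0$ from ``not all $C_k+aD_k$ vanish, else the algebra would be three-dimensional'', but the exact two-dimensionality is precisely what the foregoing analysis is meant to establish, and it remains unproven in the $p=0$ branch. To complete your approach you must either rule out $p=0$ directly from the determining equations, or show that in that case too some antiderivative $F$ satisfies \eqref{eq:0-nuj}; neither step is present.
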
 

\begin{remark}
  (Generator of the symmetry algebra $\mathcal{L}$ of \eqref{EQ:01}.) In
  case conditions (\ref{eq:f_nnotzero}--\ref{eq:0-nuj})
  \begin{enumerate}
  \item do not hold, then the symmetry algebra $\mathcal{L}$ is generated
    by $\frac{\partial}{\partial t}$,
  \item hold, then the 2 dimensional Lie symmetry algebra $\mathcal{L}$ is
    generated 
    \begin{enumerate}
    \item [i)] by $\frac{\partial}{\partial t}$ and
      $t \frac{\partial}{\partial t} + g(u) \frac{\partial}{\partial u}$ if
      $a= 0$, or 
    \item [ii)] by $\frac{\partial}{\partial t}$ and
      $e^{at} \frac{\partial}{\partial t} + ae^{at}g(u)
      \frac{\partial}{\partial u}$ if $a\neq0$.
    \end{enumerate}
  \end{enumerate}
\end{remark}

\begin{proof}
  The left-hand side of \eqref{EQ:0} has to be zero for all $(t,u,\dot u)$.
  As $\xi$, $\eta$, $f_k$ ($0\leq k\leq n$) do not depend on $\dot u$,
  \eqref{EQ:0} is a polynomial in $\dot u$.  Thus, \eqref{EQ:0} holds if
  and only if each of its coefficients is zero.  Since $f_n$ is not
  constant 0 on the interval $I$, there exists an open interval
  $U' \subseteq I$ such that $f_n(u)\neq 0$ for $u \in U'$.  Thus, from the
  coefficient of $\dot{u}^{n+1}$ by $n\geq 4$ we obtain
  \begin{equation*}
    % \label{eq:xiu}
    \xi_u = 0.
  \end{equation*}
  In particular, $\xi$ only depends on $t$ and not on $u$.  Substituting
  $\xi_u=0$ into \eqref{EQ:0} and considering the coefficients, we obtain
  that
  \begin{subequations} 
    \label{eq:0-n}
    \begin{alignat}{5}
      \label{eq:0}
      -\eta_{tt} + f_0'\eta + f_1\eta_t +(-1) f_0 \eta_u + 2 f_0 \xi_t & =0,
      \\
      \label{eq:1}
      \left( \xi_{tt} - 2 \eta_{tu} \right) + f_1'\eta + 2 f_2 \eta_t +
      f_1\xi_t & =0,
      \\
      \label{eq:2}
      -\eta_{uu} + f_2' \eta + 3 f_3 \eta_t + f_2 \eta_u & =0,
      \\
      \label{eq:k}
      f_k' \eta + (k+1)f_{k+1} \eta_t + (k-1)f_k \eta_u + (2-k)f_k \xi_t
      & =0, \qquad 3 \leq k \leq n-1
      \\
      \label{eq:n}
      f_n' \eta + (n-1)f_n \eta_u + (2-n)f_n \xi_t & =0.
    \end{alignat}
  \end{subequations} 
  In the following we analyze the system
  \eqref{eq:0-n}. %(\ref{eq:0}--\ref{eq:n}).
  Note, that $\xi = c$, $\eta = 0$ (for any $c \in \R$) satisfies
  \eqref{eq:0-n}. %(\ref{eq:0}--\ref{eq:n}).
  Further, if $\eta = 0$ then from \eqref{eq:n} we have $\xi_t = 0$ and
  $\xi = c$ for some $c \in \R$.  Thus, in the following we assume that
  $\eta$ is not constant 0.

  Consider \eqref{eq:n} first.  Now, $f_n$ is nonzero on the open interval
  $U'$, therefore either $f_n(u)>0$ for all $u \in U'$ or $f_n(u)<0$ for
  all $u \in U'$.  Choose $\varepsilon \in \halmaz{1, -1}$ such that
  $\varepsilon f_n(u) > 0$ for all $u \in U'$, that is
  $\abs{f_n} = \varepsilon f_n$.  Then we have
  \begin{displaymath}
    \abs{f_{n}}' \eta + (n-1) \abs{f_{n}} \eta_u + (2-n)\abs{f_{n}} \xi_t =0.
  \end{displaymath}
  Multiplying by $\frac{1}{n-1}\abs{f_{n}}^{\frac{2-n}{n-1}}$ yields
  \begin{displaymath}
    \frac{\abs{f_{n}}' \cdot \abs{f_{n}}^{\frac{2-n}{n-1}} }{(n-1)} \eta
    + \abs{f_{n}}^{\frac{1}{n-1}} \eta_u = \frac{n-2}{n-1}
    \abs{f_{n}}^{\frac{1}{n-1}}  \xi_t. 
  \end{displaymath}
  Here, the left-hand side is the $u$-derivative of
  $\abs{f_{n}}^{\frac{1}{n-1}} \eta$, hence
\begin{align*}
  \left( \abs{f_{n}}^{\frac{1}{n-1}} \eta \right)_u &= \frac{n-2}{n-1} \abs{f_{n}}^{\frac{1}{n-1}}  \xi_t, \\
  \abs{f_{n}}^{\frac{1}{n-1}} \eta &= \frac{n-2}{n-1} \xi_t \int
                                     \abs{f_{n}}^{\frac{1}{n-1}} du,
  \\
  \eta &= \xi_t \frac{\left(n-2\right) \int \abs{f_{n}}^{\frac{1}{n-1}} du}{\left( n-1 \right) \abs{f_{n}}^{\frac{1}{n-1}}}.
\end{align*}
Let $F \colon U' \to \R$ be a function defined as in \eqref{eq:F}, that is 
\begin{align*}
  F'(u) &:= \abs{f_{n}(u)}^{\frac{1}{n-1}}, 
          \intertext{%
          and let $g$ be defined as in Theorem \ref{thm:main}, that is}
          g(u) &:= \frac{\left(n-2\right) F(u)}{\left( n-1 \right) F'(u)}. 
\end{align*}
Thus we obtained that
\begin{equation}
\label{eq:eta}
\eta = g(u) \cdot \xi_t(t). 
\end{equation}
Note, that $g(u)$ cannot be constant 0 on $U'$ (otherwise both $F$ and
$F' = \abs{f_{n}}^{\frac{1}{n-1}}$ were constant 0), thus there exists an
open interval $U \subseteq U'$ such that $F(u) \neq 0$ for all $u \in U$,
and hence $g(u)\neq 0$ ($u \in U$).  By substituting \eqref{eq:eta} into
\eqref{eq:k} we have
\begin{displaymath}
  - (k+1)f_{k+1} g \xi_{tt} = \left( f_k' g + (k-1)f_k g' +(2-k)f_k \right)
  \xi_t.
\end{displaymath}
In particular, for $k=n-1$ we have $f_n(u) g(u) \neq 0$ for all $u \in U$,
thus
\begin{displaymath}
  \xi_{tt} = - \frac{f_{n-1}' g + (n-2)f_{n-1} g'
    + (3-n)f_{n-1}}{n f_n g }\xi_t . 
\end{displaymath}
Now, $f_n$, $f_{n-1}$, and $g$ only depend on $u$, and $\xi_{tt}$ and
$\xi_t$ only depend on $t$.  Therefore there exists $a \in \R$ such that
\begin{align}
\label{eq:a}
a & = - \frac{f_{n-1}' g + (n-2)f_{n-1} g' + (3-n) f_{n-1}}{n f_n g }, \\
\label{eq:xi}
  \xi_{tt} &= a \xi_t,
             \intertext{%
             or else $\xi_t = 0$ implying $\eta = 0$ by \eqref{eq:eta}, a contradiction. 
             Thus,
             }
\label{eq:xitfinal}
\xi_t &= c e^{at}, \\
\label{eq:etafinal}
\eta &= c e^{at} g(u)
\end{align}
for some $c \in \mathbb{R}$.  Substituting \eqref{eq:xitfinal} and
\eqref{eq:etafinal} into \eqref{eq:0-n}, %(\ref{eq:0}--\ref{eq:n}),
one obtains
\begin{alignat*}{5}
  (-a^2g + f_0'g + af_1g + (-1) f_0 g' + 2 f_0 ) \cdot ce^{at} & =0,
  \\
  ( a\left( 1 - 2 g' \right) + f_1'g + 2a f_2 g + f_1) \cdot ce^{at} & =0,
  \\
  (-g'' + f_2' g + 3 a f_3 g + f_2 g' ) \cdot ce^{at} & =0,
  \\
  (f_k' g + (k+1) a f_{k+1} g + (k-1)f_k g' + (2-k) f_k) \cdot ce^{at} &
  =0, \qquad 3 \leq k \leq n-1.
\end{alignat*}
Thus, either $f_0, f_1, f_2, \dots f_n, g, F$ satisfy the conditions
(\ref{eq:f_nnotzero}--\ref{eq:0-nuj}), or else $c=0$, implying $\xi_t = 0$
and $\eta = 0$, a contradiction.

Finally, assume that $F_1$ and $F_2$ both satisfy conditions
(\ref{eq:Fnotzero}--\ref{eq:F}) on $U$.  Then let $g_1$ and $g_2$ be
defined from $F_1$ and $F_2$, and let $a_1$ and $a_2$ be defined using
\eqref{eq:a}.  Let $\xi_1$, $\xi_2$ be such that $(\xi_i)_t = e^{a_it}$,
let $\eta_i = e^{a_it} g_i(u)$, and let
$X_i = \xi_i \frac{\partial}{\partial t} + \eta_i \frac{\partial}{\partial
  u}$ for $i =1, 2$.  Further, let $\xi = \xi_1 - \xi_2$,
$\eta = \eta_1 - \eta_2$, and
$X = \xi \frac{\partial}{\partial t} + \eta \frac{\partial}{\partial u}$.
Now, if $F_1$ and $F_2$ both satisfy conditions
(\ref{eq:Fnotzero}--\ref{eq:F}) and
\eqref{eq:0-nuj}, %(\ref{eq:0uj}--\ref{eq:kuj}),
then both $X_1$ and $X_2$ are elements of the Lie symmetry algebra
$\mathcal{L}$, and thus
$X = X_1 - X_2 = \left(\xi_1 - \xi_2\right) \frac{\partial}{\partial t} +
\left( \eta_1 - \eta_2 \right) \frac{\partial}{\partial u}\in \mathcal{L}$,
as well.  However, we have proved that if either $\xi_t \neq 0$ or
$\eta \neq 0$, then they are of the form \eqref{eq:xitfinal} and
\eqref{eq:etafinal}.  Thus, $e^{a_1t}-e^{a_2t}$ is of the form $c e^{at}$
for some $a, c\in \R$, that is $a_1 = a_2$ and $c = 0$.  Then $\xi_t = 0$,
implying $\eta = 0$ by \eqref{eq:eta}, which yields $g_1 = g_2$.  Finally,
by $F_1'=F_2'$, the definition of $g$ immediately implies
\[
0 = g_1 - g_2 = \frac{n-2}{(n-1) F_1'}\left( F_1 - F_2 \right),
\]
and hence $F_1 = F_2$. 
This finishes the proof of Theorem~\ref{thm:main}.
\end{proof}

\section{Equivalent description of the conditions}
\label{sec:conditions}

In this paragraph we provide a necessary and sufficient condition for
$f_0, \dots , f_n$ to satisfy \eqref{eq:0-nuj}. We have the following 

\begin{thm}\label{thm:conditions}
  Let $U \subseteq \R$ be an open interval,
  $f_0, \dots , f_n, F \colon U \to \R$ be functions satisfying
  (\ref{eq:f_nnotzero}--\ref{eq:F}), $g$ as introduced in Theorem
  \ref{thm:main} and let $\varepsilon, \nu \in \halmaz{1,-1}$, $a \in \R$
  be real constants such that $\abs{f_n} = \varepsilon f_n$,
  $\abs{F} = \nu F$.  
  Then $f_0, \dots , f_n, F, g$ satisfy
  \eqref{eq:0-nuj} %(\ref{eq:0uj}--\ref{eq:kuj})
if and only if there exist real constants $b_k$ 
and functions $A_k, B_k \colon U \to \R$ ($0 \leq k \leq n$) 
where $b_n = \varepsilon \left(\frac{n-1}{n-2} \right)^{1-n}$,
$A_n$ is constant 0, 
%  \begin{equation}
\begin{align}
    \label{eq:Ak}
    A_{k}(u) &= \sum \limits_{i=1}^{n-k} (-\nu)^i \binom{k+i}{i}
    a^i b_{k+i} \abs{F(u)}^{\frac{i(n-1)}{n-2}}, & & (0\leq k\leq n-1),
\end{align}
%  \end{equation}
and 
\begin{subequations}
  \begin{align}
    \label{eq:B_k}
    B_k(u) &= A_k(u), & (3 \leq k \leq n) \\
    \label{eq:B_2}
    B_2(u) &= \nu A_2(u), \\
    \label{eq:B_1}
    B_1(u) &= A_1(u)-a \abs{F(u)}^{\frac{n-1}{n-2}}(2 b_2(1- \nu)+1), \\
    \label{eq:B_0}
    B_0(u) &= A_0(u)+a^2 \left( 1+ b_2(1- \nu) \right) \nu \abs{F(u)}^{\frac{2(n-1)}{n-2}},
\end{align}
\end{subequations}
such that $f_0, \dots , f_n$ are of the form
  \begin{subequations}
    \begin{align}
      \label{eq:fk}
      f_k(u) &= \left( b_k+B_k(u) \right) \cdot \left( \frac{n-1}{n-2}
               \right)^{k-1} \cdot \abs{F(u)}^{\frac{k-n }{n-2}}
               \cdot \left( F'(u) \right)^{k-1}, & (0\leq k\leq n, k\neq 2)
      \\
      \label{eq:f2}
      f_2(u) &= \left( b_2 +B_2(u) \right) \cdot \frac{n-1}{n-2} \cdot
               \frac{F'(u)}{F(u)} + \frac{F'(u)}{F(u)} - \frac{F''(u)}{F'(u)}.
    \end{align}
  \end{subequations}
%             \intertext{%
             In particular, 
             if $a=0$, 
             then $B_k(u)=0$ for all $u \in U$ ($0\leq k\leq n$), 
and thus $f_0, \dots , f_n, F, g$ satisfy
  \eqref{eq:0-nuj} %(\ref{eq:0uj}--\ref{eq:kuj})
if and only if there exist real constants $b_k'$ ($0 \leq k \leq n$) such that 
  \begin{subequations}
    \begin{align*}
%      \label{eq:fk'}
      f_k(u) &= b_k' \cdot \abs{F(u)}^{\frac{k-n }{n-2}}
               \cdot \left( F'(u) \right)^{k-1}, & (0\leq k\leq n, k\neq 2)
      \\
%      \label{eq:f2'}
      f_2(u) &= b_2' \cdot
               \frac{F'(u)}{F(u)} - \frac{F''(u)}{F'(u)}.
    \end{align*}
  \end{subequations}

             Further,
             if $F$ is positive on $U$, 
             then%}
\begin{subequations}
  \begin{align}
             \notag
             B_k(u) &= A_k(u), & (2 \leq k \leq n) \\
    \notag
    B_1(u) &= A_1(u)-a \left( F(u) \right)^{\frac{n-1}{n-2}}, \\
    \notag
    B_0(u) &= A_0(u)+a^2 \left( F(u) \right)^{\frac{2(n-1)}{n-2}}.
\end{align}
\end{subequations}
\end{thm}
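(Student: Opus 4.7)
The system \eqref{eq:0-nuj} has a triangular structure: for $3 \leq k \leq n-1$, equation \eqref{eq:kuj} is a first-order linear ODE for $f_k$ with $f_{k+1}$ as the inhomogeneous source, while \eqref{eq:2uj}, \eqref{eq:1uj}, and \eqref{eq:0uj} are analogous ODEs for $f_2, f_1, f_0$ with the next-higher $f$ as source plus additional $g$- and $a$-dependent forcing. My plan is thus to solve the system top-down: first fix $f_n$ via \eqref{eq:F}, then recursively solve for $f_{n-1}, \dots, f_3$, and finally handle the special cases $k = 2, 1, 0$. For the converse direction, I would substitute the closed-form expressions back and verify that each equation telescopes.

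For the base case $k = n$, condition \eqref{eq:F} forces $f_n = \varepsilon (F')^{n-1}$, which matches \eqref{eq:fk} with $b_n = \varepsilon \left( \frac{n-1}{n-2} \right)^{1-n}$ and $B_n = 0$ (the exponent of $\abs{F}$ vanishes). For the recursive step $3 \leq k \leq n-1$, I would substitute the ansatz $f_k = (b_k + B_k)\phi_k$, where $\phi_k := \left(\frac{n-1}{n-2}\right)^{k-1}\abs{F}^{(k-n)/(n-2)}(F')^{k-1}$, into \eqref{eq:kuj}. A direct verification shows that $\phi_k$ solves the associated homogeneous equation $\phi_k' g + (k-1)\phi_k g' + (2-k)\phi_k = 0$, so the full equation collapses to
\begin{displaymath}
  B_k' \, \phi_k \, g = -(k+1) a (b_{k+1} + B_{k+1}) \phi_{k+1} \, g.
\end{displaymath}
Using $\phi_{k+1}/\phi_k = \tfrac{n-1}{n-2}\, F' \abs{F}^{1/(n-2)}$ and setting $H := \abs{F}^{(n-1)/(n-2)}$, which satisfies $H' = \tfrac{n-1}{n-2}\, \nu F'\abs{F}^{1/(n-2)}$, this simplifies to $B_k' = -(k+1) a \nu (b_{k+1} + B_{k+1}) H'$. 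Induction downward from $k = n-1$ starting from $B_n = 0$, combined with $\int H^i H' \, du = H^{i+1}/(i+1)$, then yields $B_k = A_k$ as in \eqref{eq:Ak}; the binomial coefficients arise from the identity $\binom{k+i+1}{i+1} = \tfrac{k+i+1}{i+1}\binom{k+i}{i}$ combined with the iterated factor $-\nu$.

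The low-order cases $k = 2, 1, 0$ require additional care because each of \eqref{eq:2uj}, \eqref{eq:1uj}, \eqref{eq:0uj} carries an extra forcing term absent from the general pattern. For $k = 2$, the additional term $-g''$ admits the particular solution $g'/g = F'/F - F''/F'$, which supplies the extra summand in \eqref{eq:f2}; the sign factor $\nu$ in $B_2 = \nu A_2$ appears because integrating $f_3 g$ against the reciprocal $g^{-1} = \tfrac{(n-1) F'}{(n-2) F}$ introduces one additional sign compared to the recursion above. For $k = 1$, the extra $a(1-2g')$ term in \eqref{eq:1uj} produces the correction $-a H(2 b_2(1-\nu)+1)$ in \eqref{eq:B_1}, and for $k = 0$ the extra $-a^2 g$ term in \eqref{eq:0uj} produces the correction $a^2(1+b_2(1-\nu))\nu H^2$ in \eqref{eq:B_0}. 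The special subcases $a = 0$ (all $A_k$ vanish) and $\nu = 1$ (the $(1-\nu)$ factors drop out) stated at the end of the theorem then follow by direct inspection.

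The main obstacle will be the combinatorial bookkeeping in the recursive step: proving by induction that the successive integration of $B_k' = -(k+1) a \nu (b_{k+1}+B_{k+1}) H'$ produces the explicit closed form \eqref{eq:Ak} requires careful tracking of the sign $\nu$ that arises each time $\abs{F}^\alpha$ is differentiated, and careful choice of the constants of integration so that the homogeneous part is exactly $b_k \phi_k$. A secondary obstacle is verifying the $(1-\nu)$ prefactors in $B_1$ and $B_0$, which reflect a genuine sign-dependent interplay between the $b_2$ contribution to $f_2$ (itself containing an inhomogeneous $g'/g$ piece with no analogue for $k \neq 2$) and the forcing by $f_2$ in equations \eqref{eq:1uj} and \eqref{eq:0uj}.
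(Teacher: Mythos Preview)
Your proposal is correct and follows essentially the same approach as the paper: variation of parameters with the homogeneous solutions $\phi_k$ (which the paper calls $h_k$), a downward induction establishing that $B_k$ satisfies the same recursion as $A_k$, and separate treatment of $k=2,1,0$ with their extra forcing terms. The only differences are organizational: the paper first derives the homogeneous solutions $\phi_k$ by solving the $a=0$ system explicitly (rather than verifying them directly as you propose), and isolates the closed-form computation of $A_k$ from its defining recursion into a standalone lemma, whereas you fold that computation into the main induction using the shorthand $H=\abs{F}^{(n-1)/(n-2)}$.
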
 

First, in Section~\ref{ssec:aux} we show that $A_k$ ($0\leq k\leq n$)
defined by \eqref{eq:Ak} satisfy a recursive system of differential
equations.  The details are contained in Lemma~\ref{lem:Ak}.  Then in
Section~\ref{ssec:a=0} we consider the case $a=0$, when
\eqref{eq:0-nuj} %(\ref{eq:0uj}--\ref{eq:kuj})
results in homogeneous equations for $f_k$.  Finally, in
Section~\ref{ssec:aneq0} we prove Theorem~\ref{thm:conditions} by
considering the general case $a\neq 0$ and applying the method of variation
of parameters.

\subsection{Auxiliary functions}\label{ssec:aux}

Let us use the notations of Theorem~\ref{thm:conditions}. 

\begin{lemma} \label{lem:Ak}
Let 
$A_n(u)=0$. 
For all $0 \leq k \leq n-1$ a particular solution of the ordinary differential equation 
\begin{equation} \label{eq:Aksystem}
A_{k}(u)'=- \left( k+1 \right) a \left(\frac{n-1}{n-2}\right) \abs{F(u)}^{\frac{1}{n-2}} F(u)' \left( b_{k+1} + A_{k+1}(u) \right), 
\end{equation} 
where $b_{k+1}$ is an arbitrary real constant, 
is the function $A_{k}$ defined by \eqref{eq:Ak} in Theorem~\ref{thm:conditions}.
\end{lemma}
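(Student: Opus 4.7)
The statement is an explicit-formula verification: we have a candidate closed form for $A_k$ and want to check it solves the given first-order linear ODE. Since the recursion expresses $A_k'$ in terms of $A_{k+1}$, a natural approach is a direct substitution argument: differentiate the proposed formula for $A_k$, then independently expand the right-hand side of \eqref{eq:Aksystem} using the formula for $A_{k+1}$, and match the two series coefficient by coefficient. No induction on $k$ is really needed, since the identity for a fixed $k$ involves only the closed forms for $A_k$ and $A_{k+1}$, which we are free to substitute.

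The first step is to compute $A_k'$. Because $\abs{F}=\nu F$ we have $\abs{F}' = \nu F'$, so term by term
\[
\Bigl(\abs{F}^{\frac{i(n-1)}{n-2}}\Bigr)' = \nu\,\frac{i(n-1)}{n-2}\,\abs{F}^{\frac{i(n-1)-(n-2)}{n-2}}\,F',
\]
which lets me write $A_k'(u)$ as a sum over $i=1,\dots,n-k$ with a common factor $\tfrac{n-1}{n-2}\abs{F}^{\frac{1}{n-2}}F'$ multiplying powers $\abs{F}^{\frac{(i-1)(n-1)}{n-2}}$. Next I plug the formula for $A_{k+1}$ into the right-hand side of \eqref{eq:Aksystem}, split off the $b_{k+1}$ piece, and reindex the sum by $i=j+1$ so that both sides become sums indexed from $i=1$ to $n-k$ over the same monomials in $\abs{F}$ and $b_{k+i}$.

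Comparing coefficients, the $i=1$ term on each side yields $-(k+1)a\,\tfrac{n-1}{n-2}\,b_{k+1}\abs{F}^{\frac{1}{n-2}}F'$, using $\nu^2=1$; this is automatic. For $i\ge 2$, after pulling out common factors the identification reduces to the purely combinatorial identity
\[
i\binom{k+i}{i} = (k+1)\binom{k+i}{i-1},
\]
together with $(-\nu)^{i-1} = -\nu(-\nu)^{i}$ (again using $\nu^2=1$) to reconcile signs. The binomial identity is immediate from rewriting both sides as $(k+i)!/((i-1)!\,k!)$. Finally one must check that the summation ranges match: $A_{k+1}$ has range $1\le j\le n-k-1$, giving $i=j+1$ up to $n-k$, which is exactly the range used in $A_k$, while the top index $i=n-k$ involves $b_n$ which is allowed since $A_n\equiv 0$ does not appear on the right-hand side for $k=n-1$ (the sum is just $-n a\,\tfrac{n-1}{n-2}\abs{F}^{\frac{1}{n-2}}F'\,b_n$, matching $A_{n-1}'$).

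The only real obstacle is bookkeeping: keeping track of $\nu$, the exponents of $\abs{F}$, and the index shift $j\mapsto i-1$ without arithmetic slips. Once the combinatorial identity is isolated, the verification is transparent, so I would present the proof as a single computation of $A_k'$ followed by a one-line coefficient comparison justified by the binomial identity above.
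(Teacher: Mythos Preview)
Your proposal is correct. Both you and the paper verify the same recursive identity, but the paper frames it as an induction on $m=n-k$: it assumes the closed form for $A_{k+1}$, substitutes into the right-hand side of \eqref{eq:Aksystem}, \emph{integrates}, and checks that the result is the closed form for $A_k$. You instead \emph{differentiate} the closed form for $A_k$ and compare directly with the right-hand side, isolating the binomial identity $i\binom{k+i}{i}=(k+1)\binom{k+i}{i-1}$. The two arguments are mirror images of one another; your version is marginally cleaner because, as you point out, once explicit formulas for both $A_k$ and $A_{k+1}$ are in hand the induction scaffolding is unnecessary, and the verification for all $k$ is a single uniform computation rather than a base case plus inductive step.
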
 

\begin{proof}
We prove \eqref{eq:Ak} by induction on $m=n-k$. 
For $m=1$ we have 
\begin{align*}
A'_{n-1} &= - n a b_n \left( \frac{n-1}{n-2} \right) \abs{F}^{\frac{1}{n-2}} F', 
\intertext{%
and a particular solution is
}
A_{n-1} &= - \nu n a b_n \abs{F}^{\frac{n-1}{n-2}}. 
\end{align*}
This proves \eqref{eq:Ak} for $k=n-1$. 
Assume now, 
that \eqref{eq:Ak} holds for an integer $m=n-k$, $1 \leq m \leq n$, 
that is
\[
A_{n-m} =
\left(\sum \limits_{i=1}^{m} (- \nu)^i \binom{n-m+i}{i} a^i b_{n-m+i} \abs{F}^{\frac{i(n-1)}{n-2}} \right).  
\]
Putting this into \eqref{eq:Aksystem} for $k=n-(m+1)$
one obtains
\begin{align*}
A'_{n-(m+1)} &= -(n-m) a \left(\frac{n-1}{n-2}\right) |F |^{\frac{1}{n-2}} F' \cdot 
\left( b_{n-m} + \left( 
\sum \limits_{i=1}^{m} (- \nu)^i \binom{n-m+i}{i} a^i b_{n-m+i} \abs{F}^{\frac{i(n-1)}{n-2}} \right) \right).  
\intertext{%
By integrating, 
one can obtain a particular solution as
}
A_{n-(m+1)} &=- \nu (n-m) a b_{n-m} \abs{F}^{\frac{n-1}{n-2}} 
-\nu  \sum \limits_{i=1}^{m} (- \nu)^i \frac{n-m}{i+1} \binom{n-m+i}{i} a^{i+1} b_{n-m+i} \abs{F}^{\frac{(i+1)(n-1)}{n-2}} \\
&=- \nu (n-m) a b_{n-m} \abs{F}^{\frac{n-1}{n-2}}
+\sum \limits_{j=2}^{m+1} (- \nu)^{j}  \frac{n-m}{j} \binom{n-m-1+j}{j-1} a^{j} b_{n-m-1+j} \abs{F}^{\frac{j(n-1)}{n-2}} \\
&= \sum \limits_{i=1}^{m+1} (- \nu)^i \binom{n-(m+1)+i}{i} a^{i} b_{n-(m+1)+i} \abs{F}^{\frac{i(n-1)}{n-2}}.
\end{align*}
Hence,
\eqref{eq:Ak} holds for $k=n-(m+1)$ and by induction it holds for all integers $0 \leq k \leq n-1$. 
\end{proof}

\subsection{The homogeneous case}\label{ssec:a=0}

Assume $a=0$. 
Now, \eqref{eq:0-nuj} %(\ref{eq:0uj}--\ref{eq:kuj}) 
takes the form
\begin{subequations}
\label{eq:a0}
\begin{alignat}{4}
\label{eq:0a0}
 f_0'g + (-1)f_0 g' + 2 f_0 &=0,
\\
\label{eq:1a0}
    f_1'g  +  f_1 &=0,
\\
\label{eq:2a0}
  -g'' + f_2' g + f_2 g'  &=0,
\\
\label{eq:ka0}
 f_k' g + (k-1)f_k g' + (2-k) f_k  & =0, \qquad (3 \leq k \leq n-1). 
\end{alignat}
\end{subequations}
Note, 
that \eqref{eq:ka0} for $k=0, 1$ gives \eqref{eq:0a0} and \eqref{eq:1a0}. 
Now, 
$g(u) \neq 0$ for $u \in U$, 
hence the solution of \eqref{eq:ka0} is
\begin{align}
\notag
f_k &= \exp \left( \int \frac{(1-k)g'+(k-2)}{g} \right) = \exp \left( (1-k) \ln \abs{g} + (k-2) \int \frac{1}{g} \right) \\
\label{eq:solforf_kifa=0}
&= \abs{g}^{1-k} \cdot \exp \left( \left( k-2 \right) \int \frac{\left( n-1 \right) F'}{\left(n-2\right) F} \right)
= \abs{g}^{1-k} \cdot \exp \left( \frac{\left( k -2 \right)  \left( n-1 \right) }{n-2}\int \left( \ln \abs{F} \right)' \right) \\
\notag
&= b_k \cdot \abs{g}^{1-k} \cdot \abs{F}^{\frac{\left( k-2 \right) \left( n-1 \right) }{n-2}} 
= b_k \cdot \left( \frac{n-1}{n-2} \right)^{k-1} \cdot \left( F' \right)^{k-1} \cdot  \abs{F}^{\frac{k-n }{n-2}} 
\intertext{%
for some $b_k \in \R$ ($0 \leq k\leq n-1$, $k \neq 2$). 
For $k=2$ equation \eqref{eq:2a0} has the form $(-g'+f_2g)'=0$, 
thus
}
\label{eq:solforf_2ifa=0}
f_2 &= \frac{g'+b_2}{g} = \left( 1 + b_2 \frac{n-1}{n-2} \right) \cdot \frac{F'}{F} - \frac{F''}{F'}
\end{align}
for some $b_2 \in \R$. 
This proves Theorem~\ref{thm:conditions} in the case $a=0$ by selecting 
$b_k' = b_k \cdot \left( \frac{n-1}{n-2} \right)^{k-1}$ ($0 \leq k\leq n, k\neq 2$) and $b_2' = 1 + b_2\cdot \frac{n-1}{n-1}$.

\subsection{The general (inhomogeneous) case}\label{ssec:aneq0}

\begin{proof}[Proof of Theorem~\ref{thm:conditions}]
If $a \neq 0$, 
then \eqref{eq:kuj} is an inhomogeneous linear differential equation for $f_k$, 
and by \eqref{eq:solforf_kifa=0} its general solution (by variation of parameters) is
\begin{equation} \label{eq:sol}
f_{k}=(b_{k}+B_{k}) \cdot \left( \frac{n-1}{n-2} \right)^{k-1} \cdot \abs{F}^{\frac{k-n }{n-2}} \cdot (F')^{k-1}, 
\end{equation} 
for some function $B_k=B_k(u)$ and constant $b_{k} \in \mathbb R$. 
Write $f_k$ in the form $f_k=(b_{k}+B_{k}) h_k$, 
where 
\[
h_k=\left( \frac{n-1}{n-2} \right)^{k-1} \cdot \abs{F}^{\frac{k-n }{n-2}} \cdot (F')^{k-1}.
\]
Putting \eqref{eq:sol} into \eqref{eq:kuj} we obtain  
\[
\bigl( \left( b_k+B_k \right) h_k' g+ \left( k-1 \right) \left( b_k+B_k \right) h_k g'+ \left( 2-k \right) \left( b_k+B_k \right) h_k \bigr) 
+ B_k' h_k g+ (k+1) a f_{k+1} g=0.
\]
Now, 
$h_k$ is a particular solution of the homogeneous differential equation \eqref{eq:ka0}, 
thus 
\[
(b_k+B_k) h_k' g+(k-1)(b_k+B_k) h_k g'+ (2-k)(b_k+B_k) h_k=0. 
\]
Since $h_k(u)\neq 0$ for all $u \in U$, 
$B_k$ is a particular solution of the differential equation  
\begin{equation}
\label{eq:Bk}
B_{k}' 
=- \frac{(k+1) a f_{k+1}}{h_k} 
= -(k+1) a f_{k+1} \left(\frac{n-1}{n-2}\right)^{1-k} \abs{F}^{\frac{n-k}{n-2}} (F')^{1-k},  \qquad (3 \leq k \leq n-1). 
\end{equation}
We prove by induction on $m=n-k$ that $B_{k}=A_{k}$ ($3 \leq k \leq n-1$) by showing that $B_k$ satisfies the recursive system of differential equations \eqref{eq:Aksystem} of Lemma~\ref{lem:Ak}. 
Let $m=1$, 
that is $k=n-1$. 
From \eqref{eq:F} we have $f_n=\varepsilon (F')^{n-1}$. 
Applying \eqref{eq:Bk} we obtain  
\begin{equation}
\label{eq:Bn-1der}
B_{n-1}'
=-n a f_n \left(\frac{n-1}{n-2}\right)^{2-n} (F')^{2-n} \abs{F}^{\frac{1}{n-2}}
=- n a \varepsilon \left(\frac{n-1}{n-2}\right)^{2-n} \abs{F}^{\frac{1}{n-2}} F'. 
\end{equation} 
Comparing \eqref{eq:Bn-1der} and %\eqref{eq:equ2} 
\eqref{eq:Aksystem} for $m=1$,
we find $B'_{n-1}=A'_{n-1}$ by choosing $b_n = \varepsilon \left( \frac{n-1}{n-2}\right)^{1-n}$. 
Hence, a particular solution $B_{n-1}$ of the differential equation $B'_{n-1}=A'_{n-1}$ is $A_{n-1}$. 
Therefore, for $k=n-1$ we have $B_{k}=A_{k}$. 

Assume that for an integer $4 \leq k \leq n-1$
$B_{k}=A_{k}$ holds, 
thus from \eqref{eq:sol} for $k=n-m+1$ we have
\[
f_{n-m+1}=(b_{n-m+1}+ A_{n-m+1}) \cdot \left( \frac{n-1}{n-2} \right)^{n-m} \cdot \abs{F}^{\frac{1-m}{n-2}} \cdot (F')^{n-m}.
\]
Putting this into \eqref{eq:Bk} for $k=n-m$ we obtain
\begin{equation} 
\label{eq:Bn-m-1der}
\begin{aligned}
B'_{n-m}(u)
&= -(n-m+1)a f_{n-m+1} \cdot \left( \frac{n-1}{n-2} \right)^{1-n+m} \cdot \abs{F}^{\frac{m}{n-2}} \cdot (F')^{1-n+m} \\
&= -(n-m+1)a  \cdot  \frac{n-1}{n-2} \cdot \abs{F}^{\frac{1}{n-2}} \cdot (F')(b_{n-m+1}+ A_{n-m+1}). 
\end{aligned}
\end{equation} 
Comparing \eqref{eq:Aksystem} for $k=n-m$ and \eqref{eq:Bn-m-1der} we see that $B'_{n-m}=A'_{n-m}$. 
Hence a particular solution $B_{n-m}$ of the differential equation $B'_{n-m}=A'_{n-m}$ is $A_{n-m}$. 
Therefore for $k=n-m$ one has $B_{k}=A_{k}$. 
By induction, 
for all $3 \leq k \leq n-1$ one has $B_{k}=A_{k}$. 
Thus \eqref{eq:fk} holds for $3 \leq k \leq n-1$, 
and \eqref{eq:B_k} is proved. 

We continue by proving \eqref{eq:f2} and \eqref{eq:B_2}, 
that is we show the condition on $f_k$ for $k=2$. 
Now, 
$f_2$ is the solution of the inhomogeneous linear differential equation \eqref{eq:2uj}.
The general solution of \eqref{eq:2uj} by \eqref{eq:solforf_2ifa=0} and by variation of parameters has the form
\begin{equation} \label{eq:equf2}
f_2=\frac{g'+\left(b_2+B_2\right)}{g}
\end{equation} 
for some function $B_2=B_2(u)$ and constant $b_{2} \in \mathbb R$. 
Putting \eqref{eq:equf2} into \eqref{eq:2uj},
then using 
$\left(\frac{1}{g}\right)'g = -\frac{g'}{g}$,
and the fact that $\frac{g'+b_2}{g}$ is the solution to the homogeneous differential equation \eqref{eq:2a0},
one has 
\begin{align*}
0 &= -g'' + \left( \frac{g' + b_2}{g} \right)' g + B_2' + B_2 \left( \frac{1}{g} \right)'g + 3af_3g + \frac{g'+b_2}{g}g' + \frac{B_2}{g}g' 
\\
&= -g'' + \left( \frac{g' + b_2}{g} \right)' g + \frac{g'+b_2}{g}g' + B_2 \left( \left( \frac{1}{g} \right)'g + \frac{g'}{g} \right) + B_2' + 3af_3g \\
&= B_2' + 3af_3g, 
\end{align*}
that is $B_2'=-3 a f_3 g$.  Using the form of $f_3$ given by \eqref{eq:sol}
and the definition of $g$ we obtain
\begin{equation} \label{eq:b2der}
B_2'= -3 a \left(b_3 +B_3 \right) \frac{n-1}{n-2} \nu \abs{F}^{\frac{1}{n-2}} F'. 
\end{equation}  
Now, 
$A_3=B_3$ by \eqref{eq:B_k}, 
thus comparing \eqref{eq:b2der} and \eqref{eq:Aksystem} for $k=2$ yields $B_2'=\nu A_2'$. 
Hence, a particular solution $B_{2}$ of the differential equation $B'_{2}=\nu A'_{2}$ has the form $B_2= \nu A_2$.  
Thus, \eqref{eq:f2} and \eqref{eq:B_2} hold. 

Now, 
we obtain the condition on $f_k$ for $k=1$. 
The function $f_1$ is the solution of the inhomogeneous linear differential equation \eqref{eq:1uj}. 
The general solution of \eqref{eq:1uj} by \eqref{eq:solforf_kifa=0} and by variation of parameters is 
\begin{equation} \label{eq:f1gen}
f_1=\left( b_1+B_1 \right) \abs{F}^{\frac{1-n}{n-2}}
\end{equation} 
for some function $B_1=B_1(u)$ and constant $b_{1} \in \mathbb R$. 
Putting \eqref{eq:f1gen} into \eqref{eq:1uj},
and using the fact that $\abs{F}^{\frac{1-n}{n-2}}$ is the solution to the homogeneous differential equation \eqref{eq:1a0},
one obtains 
\begin{align*}
0 &= a \left( 1 - 2g' \right) + \left( b_1 + B_1 \right) \left( \abs{F}^{\frac{1-n}{n-2}} \right)' g 
+ B_1' \abs{F}^{\frac{1-n}{n-2}} g+ 2af_2g +\left( b_1+B_1 \right) \abs{F}^{\frac{1-n}{n-2}} 
\\
&= \left( b_1 + B_1 \right) \left( \left( \abs{F}^{\frac{1-n}{n-2}} \right)' g + \abs{F}^{\frac{1-n}{n-2}} \right)
+ B_1' \abs{F}^{\frac{1-n}{n-2}} g + a \left( 1 - 2g' \right) + 2af_2g \\
&= B_1' \abs{F}^{\frac{1-n}{n-2}} g+ a(1-2 g') +2 a g f_2. 
\end{align*}
As $f_2$ has the form \eqref{eq:equf2} and $B_2 = \nu A_2$ by
\eqref{eq:B_2}, we obtain that
\begin{equation}
\label{eq:b1der}
\begin{aligned}
  B_1' &= \frac{\abs{F}^{\frac{n-1}{n-2}}}{g} a \left( 2g'-1 - 2 \left( g'
      + b_2 + B_2 \right) \right) = - a \left( \frac{n-1}{n-2} \right) \nu
  \abs{F}^{\frac{1}{n-2}} F' \left( 1+ 2 b_2 + 2 \nu A_2 \right)
  \\
  &= - 2 a \left( b_2 + A_2 \right) \left( \frac{n-1}{n-2} \right)
  \abs{F}^{\frac{1}{n-2}} F' - a \left( 2b_2 \left( \nu -1 \right) + \nu
  \right) \left( \frac{n-1}{n-2} \right) \abs{F}^{\frac{1}{n-2}} F'.
\end{aligned}
\end{equation}
Comparing \eqref{eq:b1der} and \eqref{eq:Aksystem} for $k=1$ we obtain
\begin{equation} \label{eq:b1dermeg} 
B_1'=A_1' -a (2b_2(\nu -1)+ \nu) \frac{n-1}{n-2} \abs{F}^{\frac{1}{n-2}} F'. 
\end{equation} 
Hence a particular solution $B_{1}$ of the differential equation \eqref{eq:b1dermeg} has the form  
\begin{equation}\label{eq:b1meg}
B_1=A_1 -a (2b_2(1-\nu )+ 1)\abs{F}^{\frac{n-1}{n-2}}. 
\end{equation}
Therefore, 
\eqref{eq:fk} holds for $k=1$, 
and \eqref{eq:B_1} is proved.

Finally, we prove that \eqref{eq:fk} holds for $k=0$.  For $k=0$ the
function $f_0$ is the solution of the inhomogeneous linear differential
equation \eqref{eq:0uj}.  The general solution of \eqref{eq:0uj} by
\eqref{eq:solforf_kifa=0} and by variation of parameters is
\begin{equation} \label{eq:f0gen} f_0 = \left( b_0+ B_0 \right) \left(
    \frac{n-2}{n-1} \right) \abs{F}^{\frac{-n}{n-2}} \left(F'\right)^{-1}
\end{equation}
for some function $B_0=B_0(u)$ and constant $b_{0} \in \mathbb R$.  Putting
\eqref{eq:f0gen} into \eqref{eq:0uj}, and using the fact that
$\left( \frac{n-2}{n-1} \right) \abs{F}^{\frac{-n}{n-2}}
\left(F'\right)^{-1} $ is the solution to the homogeneous differential
equation \eqref{eq:0a0}, we obtain
\begin{align*}
  0 &= -a^2 g + \left( b_0+ B_0 \right) \left( \frac{n-2}{n-1} \right) \left( \abs{F}^{\frac{-n}{n-2}} \left(F'\right)^{-1} \right)' g 
      + B_0'  \left( \frac{n-2}{n-1} \right) \abs{F}^{\frac{-n}{n-2}} \left(F'\right)^{-1}g 
  \\
    & \quad+ a f_1 g 
      - \left( b_0+ B_0 \right) \left( \frac{n-2}{n-1} \right) \abs{F}^{\frac{-n}{n-2}} \left(F'\right)^{-1} g' 
      + 2 \left( b_0+ B_0 \right) \left( \frac{n-2}{n-1} \right) \abs{F}^{\frac{-n}{n-2}} \left(F'\right)^{-1}
  \\
    &= B_0'  \left( \frac{n-2}{n-1} \right) \abs{F}^{\frac{-n}{n-2}}
      \left(F'\right)^{-1}g  -a^2 g + a f_1 g. 
\end{align*}
As $f_1$ has the form \eqref{eq:f1gen},
and $B_1$ has the form \eqref{eq:b1meg}, 
we obtain 
\begin{equation} \label{eq:b0der}
\begin{aligned}
  B_0' &= \left( a^2 -a f_1 \right) \left( \frac{n-1}{n-2} \right)
  \abs{F}^{\frac{n}{n-2}} F'
  =\left(a^2-a \left( b_1+B_1 \right) \abs{F}^{\frac{1-n}{n-2}} \right) \left( \frac{n-1}{n-2} \right) \abs{F}^{\frac{n}{n-2}} F' \\
  &=\left( a^2-a \left(b_1+A_1-a \left( 2b_2 \left( 1-\nu \right) + 1 \right) \abs{F}^{\frac{n-1}{n-2}}  \right) \abs{F}^{\frac{1-n}{n-2}} \right) \left( \frac{n-1}{n-2} \right) \abs{F}^{\frac{n}{n-2}} F' \\
  &= -a \left( b_1+A_1 \right) \left( \frac{n-1}{n-2} \right) \abs{F}^{\frac{1}{n-2}} F' + a^2 \left( 2 + 2b_2 \left( 1- \nu \right) \right) \left( \frac{n-1}{n-2} \right) \abs{F}^{\frac{n}{n-2}} F' \\
  &= -a \left( b_1+A_1 \right) \left( \frac{n-1}{n-2} \right)
  \abs{F}^{\frac{1}{n-2}} F' + 2 a^2 \left( 1+ b_2 \left( 1- \nu \right)
  \right) \left( \frac{n-1}{n-2} \right) \abs{F}^{\frac{n}{n-2}} F'.
\end{aligned}
\end{equation}
Comparing \eqref{eq:b0der} and \eqref{eq:Aksystem} for $k=0$ we have 
\begin{equation} \label{eq:B0ujujder}
B_0'=A_0'+ 2 a^2 \left( 1+ b_2 \left( 1- \nu \right) \right) \left( \frac{n-1}{n-2} \right) \abs{F}^{\frac{n}{n-2}} F'. 
\end{equation} 
Therefore, 
a particular solution $B_0$ of \eqref{eq:B0ujujder} is  
\[
B_0=A_0+ a^2 (1+ b_2(1 -\nu )) \nu \abs{F}^{\frac{2(n-1)}{n-2}}. 
\]
Hence, 
\eqref{eq:fk} holds for $k=0$, 
and \eqref{eq:B_0} is proved.
This finishes the proof of Theorem~\ref{thm:conditions}
\end{proof}

\section{Open problems}\label{sec:problems}

Several questions arise after determining the symmetries of \eqref{EQ:01}. 
%Probably the most natural one is to apply Theorem~\ref{thm:main} to solve \eqref{EQ:01} completely. 
Indeed, 
if the Lie group of symmetries is at least two dimensional, 
then one can apply the two-dimensional solvable Lie group to obtain the solutions of \eqref{EQ:01}. 

\begin{problem}
Determine the solutions of \eqref{EQ:01}
provided $f_k$ ($0\leq k\leq n$, $n\geq 4$) satisfy the conditions of Theorem~\ref{thm:main}.
\end{problem}

The only remaining case for \eqref{EQ:01} not covered by Theorem~\ref{thm:main} or by \cite{PaliathanasisLeach2016, Pandey2009-1, Pandey2009-2, Tiwari2013, Tiwari2014, Tiwari2015} is when $n=3$. 
Then one cannot immediately conclude $\xi_u=0$ from \eqref{EQ:0}, 
because the $\dot{u}^4$ term of \eqref{EQ:0} is identically 0. 
In fact, 
for $n=3$ the symmetry condition translates to
\begin{equation}\label{eq:n=3}
\begin{alignedat}{5}
  -\eta_{tt} + f_0'\eta + f_1\eta_t - f_0 \eta_u + 2 f_0 \xi_t & =0,
  \\
  \left( \xi_{tt} - 2 \eta_{tu} \right) + f_1'\eta + 2 f_2 \eta_t +
  f_1\xi_t + 3 f_0 \xi_u & =0,
  \\
  \left( 2\xi_{tu}-\eta_{uu} \right) + f_2' \eta + 3 f_3 \eta_t + f_2
  \eta_u + 2 f_1 \xi_{u} & =0,
  \\
  \xi_{uu} + f_3' \eta + 2 f_3 \eta_u - f_3 \xi_t + f_2 \xi_u & =0.
\end{alignedat}
\end{equation}
A potential simplification of the system \eqref{eq:n=3} might be to eliminate $f_2$ from \eqref{eq:n=3}
by using a coordinate change $v=G(u)$, 
for some bijective, two-times differentiable $G$, 
for which $G''(u) = G'(u)f_2(u)$ is satisfied
(see e.g.~\cite{PaliathanasisLeach2016}).
This, however, 
still does not give an immediate answer as to what the solutions of \eqref{eq:n=3} are.

\begin{problem}
Determine all symmetries (and solutions) of the autonomous differential equation 
\[
\ddot{u} = f_0 \left( u \right) + \dot{u} f_1 \left(u \right) + \dot{u}^2 f_2 \left( u \right)  + \dot{u}^3 f_3 \left( u \right), 
\]
where $f_0, f_1, f_2, f_3$ are arbitrary continuous functions in $u$. 
\end{problem}

\def\cprime{$'$}

\vspace{1cm}


\begin{thebibliography}{10}

\bibitem{vinogradov}
A.~V. Bocharov, V.~N. Chetverikov, S.~V. Duzhin, N.~G. Khor{\cprime}kova, I.~S.
  Krasil{\cprime}shchik, A.~V. Samokhin, Y.~N. Torkhov, A.~M. Verbovetsky, and
  A.~M. Vinogradov.
\newblock {\em Symmetries and conservation laws for differential equations of
  mathematical physics}, volume 182 of {\em Translations of Mathematical
  Monographs}.
\newblock American Mathematical Society, Providence, RI, 1999.
\newblock Edited and with a preface by Krasil{\cprime}shchik and Vinogradov,
  Translated from the 1997 Russian original by Verbovetsky [A. M.
  Verbovetski{\u\i}] and Krasil{\cprime}shchik.

\bibitem{Nucci1Biology}
M.~Edwards and M.~C. Nucci.
\newblock Application of {L}ie group analysis to a core group model for
  sexually transmitted diseases.
\newblock {\em J. Nonlinear Math. Phys.}, 13(2):211--230, 2006.

\bibitem{Faria2008}
J.~R. Faria, J.~C. Cuestas, and L.~A. Gil-Alana.
\newblock {\em Unemployment and entrepreneurship: a cyclical relation?}
\newblock Discussion papers. Nottingham Trent University, Bottingham Business
  School, Economics Division, 2008/2.

\bibitem{Fitzhugh1961}
R.~Fitzhugh.
\newblock Impulses and physiological states in theoretical models of nerve
  membrane.
\newblock {\em Biophys J.}, 1(6):445--466, 1961.

\bibitem{Goodwin1975}
R.~M. Goodwin.
\newblock A growth cycle.
\newblock In C.~H. Feinstein, editor, {\em Socialism, Capitalism and Economic
  Growth}, pages 54--58, Cambridge, 1975. Cambridge University Press.

\bibitem{Jones}
D.~S. Jones, M.~Plank, and B.~D. Sleeman.
\newblock {\em Differential Equations and Mathematical Biology}.
\newblock Chapman and Hall/CRC, second edition, 2009.

\bibitem{Kaldor1940}
N.~Kaldor.
\newblock A model of the trade cycle.
\newblock {\em The Economic Journal}, 50(197):78--92, 1940.

\bibitem{Kalecki1937}
M.~Kalecki.
\newblock A theory of the business cycle.
\newblock {\em The Review of Economic Studies}, 4(2):77--97, 1937.

\bibitem{Levinson1943}
N.~{Levinson}.
\newblock {On the existence of periodic solutions for second order differential
  equations with a forcing term.}
\newblock {\em {J. Math. Phys., Mass. Inst. Techn.}}, 22:41--48, 1943.

\bibitem{LevinsonSmith1942}
N.~{Levinson} and O.~K. {Smith}.
\newblock {A general equation for relaxation oscillations.}
\newblock {\em {Duke Math. J.}}, 9:382--403, 1942.

\bibitem{Mickens1981}
R.~E. {Mickens}.
\newblock {\em {An {I}ntroduction to {N}onlinear {O}scillations.}}
\newblock {Cambridge etc.: Cambridge University Press. XIV, 224 p.}, 1981.

\bibitem{Nagumo1962}
J.~Nagumo, S.~Arimoto, and S.~Yoshizawa.
\newblock An active pulse transmission line simulating nerve axon.
\newblock {\em Proc. IRE.}, 50:2061--2070, 1962.

\bibitem{Nucci}
M.~C. Nucci.
\newblock The role of symmetries in solving differential equations.
\newblock {\em Mathl. Comput. Modelling}, 25(8--9):181--193, 1997.

\bibitem{Nucci2Biology}
M.~C. Nucci and P.~G.~L. Leach.
\newblock Singularity and symmetry analyses of mathematical models of
  epidemics.
\newblock {\em South African Journal of Science}, 105:136--146, 2009.

\bibitem{NucciSanchini2015}
M.~C. Nucci and G.~Sanchini.
\newblock Symmetries, {L}agrangians and {C}onservation {L}aws of an {E}aster
  {I}sland {P}opulation {M}odel.
\newblock {\em Symmetry}, 7(3):1613--1632, 2015.

\bibitem{Olver}
P.~J. Olver.
\newblock {\em Applications of {L}ie {G}roups to {D}ifferential {E}quations},
  volume 107 of {\em Graduate Texts in Mathematics}.
\newblock Springer-Verlag, New York, second edition, 1993.

\bibitem{PaliathanasisLeach2016}
A.~{Paliathanasis} and P.~G.~L. {Leach}.
\newblock {Comment on ``Classification of Lie point symmetries for quadratic
  Li\'enard type equation $\ddot{x} + f(x) \dot{x}^2 + g(x) = 0$'' [J. Math.
  Phys. 54, 053506 (2013)] and its erratum [J. Math. Phys. 55, 059901 (2014)].}
\newblock {\em {J. Math. Phys.}}, 57(2):024101, 2, 2016.

\bibitem{Pandey2009-1}
S.~N. {Pandey}, P.~S. {Bindu}, M.~{Senthilvelan}, and M.~{Lakshmanan}.
\newblock {A group theoretical identification of integrable cases of the
  Li{\'{e}}nard-type equation $\ddot x +f(x)\dot x +g(x)=0$. I: Equations
  having nonmaximal number of Lie point symmetries.}
\newblock {\em {J. Math. Phys.}}, 50(8):082702, 19, 2009.

\bibitem{Pandey2009-2}
S.~N. {Pandey}, P.~S. {Bindu}, M.~{Senthilvelan}, and M.~{Lakshmanan}.
\newblock {A group theoretical identification of integrable equations in the
  Li\'enard-type equation $\ddot x+f(x)\dot x+g(x) = 0$. II: Equations having
  maximal Lie point symmetries.}
\newblock {\em {J. Math. Phys.}}, 50(10):102701, 25, 2009.

\bibitem{HandbookODE}
A.~D. {Polyanin} and V.~F. {Zaitsev}.
\newblock {\em {Handbook of {E}xact {S}olutions for {O}rdinary {D}ifferential
  {E}quations. 2nd. ed.}}
\newblock Boca Raton, FL: CRC Press, 2nd. ed. edition, 2003.

\bibitem{Rayleigh1945}
J.~W. {Strutt}.
\newblock {\em {The {T}heory of {S}ound. 2nd ed.}}
\newblock {New York: Dover Publications. Two volumes in one. XIII, 480 p.; XII,
  504 p.}, 1945.

\bibitem{Tiwari2013}
A.~K. {Tiwari}, S.~N. {Pandey}, M.~{Senthilvelan}, and M.~{Lakshmanan}.
\newblock {Classification of Lie point symmetries for quadratic Li\'enard type
  equation $\ddot{x}+f(x)\dot{x}^2+g(x)=0$.}
\newblock {\em {J. Math. Phys.}}, 54(5):053506, 19, 2013.

\bibitem{Tiwari2014}
A.~K. {Tiwari}, S.~N. {Pandey}, M.~{Senthilvelan}, and M.~{Lakshmanan}.
\newblock {Erratum: ``Classification of Lie point symmetries for quadratic
  Li\'enard type equation $\ddot{x}+f(x)\dot{x}^2+g(x)=0$''.}
\newblock {\em {J. Math. Phys.}}, 55(5):059901, 2, 2014.

\bibitem{Tiwari2015}
A.~K. Tiwari, S.~N. Pandey, M.~Senthilvelan, and M.~Lakshmanan.
\newblock {Lie point symmetries classification of the mixed Li{\'e}nard-type
  equation}.
\newblock {\em Nonlinear Dynamics}, 82(4):1953--1968, 2015.

\bibitem{vanderPol1927}
B.~van~der Pol.
\newblock On relaxation-oscillations.
\newblock {\em The Lond, Edinburgh, and Dublin Philosophical Magazine and
  Journal of Science}, 2:978--992, 1927.

\bibitem{vanderPolvanderMark1928}
B.~van~der Pol and J.~van~der Mark.
\newblock The heartbeat considered as a relaxation oscillation, and an
  electrical model of the heart.
\newblock {\em The Lond, Edinburgh, and Dublin Philosophical Magazine and
  Journal of Science}, 6:763--775, 1928.

\end{thebibliography}
\end{document}